\newtheorem{theorem}{Theorem}[section]
\newtheorem{lemma}{Lemma}[section]
\newtheorem{corollary}{Corollary}[section]
\newtheorem{definition}{Definition}[section]
\theoremstyle{remark}
\newtheorem{remark}{Remark}[section]
\newtheorem{remarks}{Remarks}[section]
\newtheorem{example}{Example}[section]
\numberwithin{equation}{section}
\begin{document}

\title[Estimate for Initial MacLaurin Coefficients \dots]{Estimate for Initial MacLaurin Coefficients of Certain Subclasses of Bi-univalent Functions}

\author[P. Goswami]{Pranay Goswami}
\address{School of Liberal Studies, Ambedkar University Delhi, Delhi-110006, India}
\email{pranaygoswami83@gmail.com}

\author[B. S. Alkahtani]{Badr S. Alkahtani}
\address{Mathematics Department, College of Science, King Saud University, P.O.Box 1142, Riyadh 11989, Saudi Arabia}
\email{alhaghog@gmail.com}

\author[T. Bulboac\u{a}]{Teodor Bulboac\u{a}}
\address{Faculty of Mathematics and Computer Science, Babe\c{s}-Bolyai University, 400084 Cluj-Napoca, Romania}
\email{bulboaca@math.ubbcluj.ro}

\date{}
%***********************************
\begin{abstract}
In this paper, estimates for second and third MacLaurin coefficients of certain subclasses of bi-univalent functions in the open unit disk defined by convolution are determined, and certain special cases are also indicated. The main result extends and improve a recent one obtained by Srivastava et al.
\end{abstract}

\dedicatory{This paper is dedicated to Prof. Ravi P. Agarwal}

\keywords{Univalent functions, bi-univalent functions, bi-starlike function, bi-convex function, functions with bounded boundary rotation, coefficient estimates, convolution (Hadamard) product}

\subjclass[2010]{Primary 30C45; Secondary 30C50}

\maketitle
%***********************************
\section{Introduction and definitions}

Let $\mathcal{A}$ be the class of functions $f$ which are analytic in the open unit disk $\mathbb{D}=\{z\in\mathbb{C}:\left\vert z\right\vert<1\}$ and normalized by the conditions $f(0)=0$ and $f'(0)=1$. The Koebe one-quarter theorem \cite{duren} ensures that the image of $\mathbb{D}$ under every univalent function $f\in\mathcal{A}$ contains the disk with the center in the origin and the radius $1/4$. Thus, every univalent function $f\in\mathcal{A}$ has an inverse $f^{-1}:f(\mathbb{D})\rightarrow\mathbb{D}$, satisfying $f^{-1}(f(z))=z$, $z\in\mathbb{D}$, and
\[f\left(f^{-1}(w)\right)=w,\;|w|<r_0(f),\;r_0(f)\geq\frac{1}{4}.\]

Moreover, it is easy to see that the inverse function has the series expansion of the form
\begin{equation}\label{expinv}
f^{-1}(w)=w-a_2w^2+\left(2a_2^2-a_3\right)w^3-\left(5a_2^3-5a_2a_3+a_4\right)w^4+\dots,\;w\in f(\mathbb{D}).
\end{equation}

A function $f\in\mathcal{A}$ is said to be {\em bi-univalent}, if both $f$ and $f^{-1}$ are univalent in $\mathbb{D}$, in the sense that $f^{-1}$ has a univalent analytic continuation to $\mathbb{D}$, and we denote by $\sigma$ this class of bi-univalent functions.

In \cite{pad} the authors defined the classes of functions $\mathcal{P}_{m}(\beta)$ as follows: let $\mathcal{P}_{m}(\beta)$, with $m\geq2$ and $0\leq\beta<1$, denote the class of univalent analytic functions $P$, normalized with $P(0)=1$, and satisfying
\[\int_0^{2\pi}\left|\frac{\operatorname{Re}P(z)-\beta}{1-\beta}\right|\operatorname{d}\theta\leq m\pi,\]
where $z=re^{i\theta}\in\mathbb{D}$.

For $\beta=0$, we denote $\mathcal{P}_m:=\mathcal{P}_m(0)$, hence the class $\mathcal{P}_m$ represents the class of functions $p$ analytic in $\mathbb{D}$, normalized with $p(0)=1$, and having the representation
\begin{equation}\label{eq}
p(z)=\int\limits_0^{2\pi}\frac{1- ze^{it}}{1+ze^{it}}\operatorname{d}\mu(t),
\end{equation}
where $\mu$ is a real-valued function with bounded variation, which satisfies
\begin{equation}\label{eqP}
\int_0^{2\pi}d\mu(t)=2\pi\quad\text{and}\quad\int_0^{2\pi}\left|d\mu(t)\right|\leq m,\;m\geq2.
\end{equation}
Clearly, $\mathcal{P}:=\mathcal{P}_2$ is the well-known class of {\em Carath\'eodory functions}, i.e. the normalized functions with positive real part in the open unit disk $\mathbb{D}$ .

Lewin \cite{Lewin} investigated the class $\sigma$ of bi-univalent functions and obtained the bound for the second coefficient. Brannan and Taha \cite{bran} considered certain subclasses of bi-univalent functions, similar to the familiar subclasses of univalent functions consisting of strongly starlike, starlike and convex functions. They introduced the concept of bi-starlike functions and the bi-convex functions, and obtained estimates for the initial coefficients. Recently, Ali et al. \cite{ali3}, Srivastava et al. \cite{sri}, Frasin and Aouf \cite{frasin}, Goyal and Goswami \cite{goyal} and many others have introduced and investigated subclasses of bi-univalent functions and obtained bounds for the initial coefficients. Motivated by work of Srivastava et al. \cite{sri}, we introduce a new subclass of bi-univalent functions, as follows.

For the functions $f,\;g\in\mathcal{A}$ given by
\[f(z)=z+\sum_{n=2}^{\infty}a_{n}z^{n},\quad g(z)=z+\sum_{n=2}^{\infty}b_{n}z^{n},\;z\in\mathbb{D},\]
we recall the {\em Hadamard (or convolution) product} of $f$ and $g$, defined by
\[\left(f\ast g\right)(z)=z+\sum_{n=2}^{\infty}a_{n}b_{n}z^{n},\;z\in\mathbb{D}.\]

\begin{definition}\label{def1.1}
For a given function $k\in\sigma$, a function $f\in\sigma$ is said to be in the class $\mathcal{BR}^k(m;\beta)$, with $m\geq2$ and $0\leq\beta<1$, if the following conditions are satisfied
\begin{eqnarray*}
&&\frac{(f*k)(z)}{z}\in\mathcal{P}_{m}(\beta),\\
&&\frac{(g*k)(w)}{w}\in\mathcal{P}_{m}(\beta),
\end{eqnarray*}
where $g=f^{-1}$.
\end{definition}

\begin{remark}\label{rem1.1}
Taking $k(z)=z/(1-z)^2$ and $m =2$ in the Definition \ref{def1.1} we obtain the class $\mathcal{B}(\beta):=\mathcal{BR}^{z/(1-z)^2}(2;\beta)$ studied by Srivastava et al. \cite[Definition 2]{sri}.
\end{remark}

\begin{definition}\label{def1.2}
For a given function $k\in\sigma$ and a number $\alpha\in\mathbb{C}$, a function $f\in\sigma$ is said to be in the class $\mathcal{BV}^k(m;\alpha,\beta)$, with $m\geq2$ and $0\leq\beta<1$, if the following conditions are satisfied
\begin{eqnarray*}
&&(1-\alpha)\frac{z(f*k)'(z)}{(f*k)(z)}+\alpha\left(1+\frac{z(f*k)''(z)}{(f*k)'(z)}\right)
\in\mathcal{P}_{m}(\beta),\\
&&(1-\alpha)\frac{w(g*k)'(w)}{(g*k)(w)}+\alpha\left(1+\frac{w(g*k)''(w)}{(g*k)'(w)}\right)\in\mathcal{P}_{m}(\beta),
\end{eqnarray*}
where $g=f^{-1}$.
\end{definition}

\begin{remarks}\label{rem1.2}
$(i)$ Taking $\alpha=0$ and $\alpha=1$ in the above class $\mathcal{BV}^k(m;\alpha,\beta)$ we obtain the classes $\mathcal{S}_{m}^k(\beta):=\mathcal{BV}^k(m;0,\beta)$ and $\mathcal{C}_{m}^k(\beta):=\mathcal{BV}^k(m;1,\beta)$, respectively.

$(ii)$ Moreover, if we take $k(z)=z/(1-z)$ and $m=2$, the classes $\mathcal{S}_{m}^k(\beta)$ and $\mathcal{C}_{m}^k(\beta)$ reduces to the well-known classes of {\em bi-starlike} and {\em bi-convex functions}, respectively (see also \cite{bran}).
\end{remarks}

The object of the paper is to find estimates for the coefficients $a_2$ and $a_3$ for functions in the subclass $\mathcal{BR}^k(m;\beta)$ and $\mathcal{BV}^k(m;\alpha,\beta)$, and these bounds are obtained by employing the techniques used earlier by Srivastava et al. \cite{sri}.
%***********************************
\section{Main results}

In order to prove our main result for the functions $f\in\mathcal{BR}^k(m;\beta)$, first we will prove the following lemma:

\begin{lemma}\label{lem2.1}
Let the function $\Phi(z)=1+\sum\limits_{n=1}^\infty{h_n}{z^n}$, $z\in\mathbb{D}$, such that $\Phi\in\mathcal{P}_m(\beta)$. Then,
\[\left|h_n\right|\leq m(1-\beta),\;n\geq1.\]
\end{lemma}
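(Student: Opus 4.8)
The plan is to use the integral representation of the class $\mathcal{P}_m(\beta)$ to reduce the problem to a question about a Carathéodory-type function. First I would observe that $\Phi\in\mathcal{P}_m(\beta)$ precisely when the function $p$ defined by $p(z)=\dfrac{\Phi(z)-\beta}{1-\beta}$ belongs to $\mathcal{P}_m=\mathcal{P}_m(0)$; indeed $p$ is analytic in $\mathbb{D}$, $p(0)=\dfrac{1-\beta}{1-\beta}=1$, and the integral condition defining $\mathcal{P}_m(\beta)$ becomes exactly the one defining $\mathcal{P}_m$. Hence $p$ admits the Herglotz-type representation \eqref{eq}–\eqref{eqP}:
\[
p(z)=\int_0^{2\pi}\frac{1-ze^{it}}{1+ze^{it}}\operatorname{d}\mu(t),
\]
with $\mu$ of bounded variation satisfying $\int_0^{2\pi}\operatorname{d}\mu(t)=2\pi$ and $\int_0^{2\pi}\left|\operatorname{d}\mu(t)\right|\leq m$.

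Next I would extract the Taylor coefficients of $p$ from this formula. Writing $p(z)=1+\sum_{n\geq1}c_nz^n$ and expanding $\dfrac{1-ze^{it}}{1+ze^{it}}=1+2\sum_{n\geq1}(-1)^n e^{int}z^n$, one reads off
\[
c_n=\frac{2(-1)^n}{2\pi}\int_0^{2\pi}e^{int}\operatorname{d}\mu(t),\quad n\geq1,
\]
(using $\int_0^{2\pi}\operatorname{d}\mu(t)=2\pi$ to normalize). Taking absolute values and using $\left|e^{int}\right|=1$ together with the total-variation bound gives
\[
\left|c_n\right|\leq\frac{2}{2\pi}\int_0^{2\pi}\left|\operatorname{d}\mu(t)\right|\leq\frac{2}{2\pi}\cdot m=\frac{m}{\pi}\cdot\frac{2\pi}{2}\cdot\frac{1}{\pi}\,?
\]
—here I must be careful with the normalization constant; the classical fact is that $\left|c_n\right|\leq m$ for $p\in\mathcal{P}_m$, which is the correct reading once the $2\pi$ in \eqref{eq} is accounted for (note $\int_0^{2\pi}\operatorname{d}\mu(t)=2\pi$ plays the role of the measure's total mass, so the kernel expansion carries a compensating factor). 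Thus $\left|c_n\right|\leq m$ for all $n\geq1$.

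Finally, since $\Phi(z)=\beta+(1-\beta)p(z)=1+(1-\beta)\sum_{n\geq1}c_nz^n$, comparing with $\Phi(z)=1+\sum_{n\geq1}h_nz^n$ yields $h_n=(1-\beta)c_n$, whence $\left|h_n\right|=(1-\beta)\left|c_n\right|\leq m(1-\beta)$ for every $n\geq1$, which is the claimed bound.

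The main obstacle I anticipate is purely bookkeeping rather than conceptual: getting the normalization constant in the Herglotz representation \eqref{eq} exactly right, so that the coefficient bound comes out as $\left|c_n\right|\leq m$ and not off by a factor of $2$ or $\pi$. One clean way to sidestep any ambiguity is to verify the constant on the extremal function: for $\mu$ a suitable signed combination of point masses one obtains $p(z)=\dfrac{m+2}{4}\cdot\dfrac{1-z}{1+z}-\dfrac{m-2}{4}\cdot\dfrac{1+z}{1-z}$ (the standard extremal element of $\mathcal{P}_m$), whose $n$-th coefficient has modulus exactly $m$; this both fixes the normalization and shows the bound $\left|h_n\right|\leq m(1-\beta)$ is sharp. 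Everything else — the reduction $\Phi\mapsto p$ and the relation $h_n=(1-\beta)c_n$ — is routine.
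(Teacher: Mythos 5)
Your reduction of $\Phi$ to $p=(\Phi-\beta)/(1-\beta)\in\mathcal{P}_m$ and the final step $h_n=(1-\beta)c_n$ are exactly what the paper does; the gap is in the one quantitative step the lemma actually needs, namely $|c_n|\le m$ for $p\in\mathcal{P}_m$. Your displayed computation, taken at face value, yields $|c_n|\le\frac{2}{2\pi}\int_0^{2\pi}|d\mu(t)|\le m/\pi$, not $m$; the line ends in a question mark and you then fall back on asserting ``the classical fact is $|c_n|\le m$''. So the bound is claimed rather than derived. The underlying difficulty is not yours alone: the representation \eqref{eq}--\eqref{eqP} as printed is mis-normalized (indeed $2\pi=\bigl|\int d\mu\bigr|\le\int|d\mu|\le m$ would force $m\ge2\pi$, so the two conditions in \eqref{eqP} are incompatible for the interesting range of $m$). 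The correct Pinchuk normalization is $p(z)=\frac12\int_0^{2\pi}\frac{1+ze^{-it}}{1-ze^{-it}}\,d\mu(t)$ with $\int_0^{2\pi}d\mu(t)=2$ and $\int_0^{2\pi}|d\mu(t)|\le m$, from which $c_n=\int_0^{2\pi}e^{-int}\,d\mu(t)$ and hence $|c_n|\le m$ follow in one line. With that correction your route closes.

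The paper sidesteps the normalization entirely by invoking the decomposition $p=\left(\frac m4+\frac12\right)p_1-\left(\frac m4-\frac12\right)p_2$ with $p_1,p_2$ in the Carath\'eodory class $\mathcal{P}$, and then applying the classical bound $\bigl|p_n^{(i)}\bigr|\le2$ together with the triangle inequality to get $|c_n|\le2\left(\frac m4+\frac12\right)+2\left(\frac m4-\frac12\right)=m$. Note that the ``extremal'' function you propose as a sanity check, $\frac{m+2}{4}\cdot\frac{1-z}{1+z}-\frac{m-2}{4}\cdot\frac{1+z}{1-z}$, is precisely this decomposition specialized to $p_1(z)=\frac{1-z}{1+z}$ and $p_2(z)=\frac{1+z}{1-z}$; you had the right object in hand and only needed to use the decomposition as the proof rather than as a consistency check. (A small further slip: the coefficients of that function have modulus $m$ only for odd $n$; for even $n$ the modulus is $2$, so sharpness should be claimed only for the odd coefficients.)
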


\begin{proof}
From \eqref{eq} and \eqref{eqP}, like in \cite{pad} and \cite{noor}, we can see that if $p\in\mathcal{P}_m$, then
\begin{equation}\label{eq1}
p(z)=\left(\frac{m}{4}+\frac{1}{2}\right)p_1(z)-\left(\frac{m}{4}-\frac{1}{2}\right)p_2(z),
\end{equation}
where $p_1,p_2\in\mathcal{P}$.

Further, if $p(z)=1+\sum\limits_{n=1}^{\infty}p_nz^n$, $z\in\mathbb{D}$, where $p_1(z)=1+\sum\limits_{n=1}^{\infty}p_n^{(1)}z^n$ and $p_2(z)=1+\sum\limits_{n=1}^{\infty}p_n^{(2)}z^n$ for all $z\in\mathbb{D}$, comparing the coefficients of both sides of \eqref{eq1} we get
\[p_n=\left(\frac{m}{4}+\frac{1}{2}\right)p_n^{(1)}-\left(\frac{m}{4}-\frac{1}{2}\right)p_n^{(2)},\;n\geq1.\]
Since $p_1,p_2\in\mathcal{P}$, where $\mathcal{P}$ is the class of Carath\'eodory functions, it is well-known that  $|p_n^{(1)}|\leq2$ and $|p_n^{(2)}|\leq2$ for all $n\geq1$, and thus
\begin{equation}\label{eq2}
\left|p_n\right|\leq\left(\frac{m}{4}+\frac{1}{2}\right)\left|p_n^{(1)}\right|+
\left(\frac{m}{4}-\frac{1}{2}\right)\left|p_n^{(2)}\right|\leq2\left(\frac{m}{4}+\frac{1}{2}\right)+
2\left(\frac{m}{4}-\frac{1}{2}\right)=m,\;n\geq1.
\end{equation}

Now, the proof of this lemma is straight forward, if we write
\[\Phi(z)=(1-\beta )p(z)+\beta,\quad\text{where}\quad p(z)=1+\sum\limits_{n=1}^\infty p_nz^n\in\mathcal{P}_m.\]
Then,
\[\Phi(z)=1+(1-\beta)\sum\limits_{n=1}^\infty{p_n}z^n,\;z\in\mathbb{D},\]
which gives
$$h_n=(1-\beta)p_n,\;n\geq1,$$
and using the inequality \eqref{eq2} we obtain the desired result.
\end{proof}

\begin{theorem}\label{thm2.1}
Let $f(z)=z+\sum\limits_{n=2}^{\infty}a_{n}z^{n}$ be in the class $\mathcal{BR}^k(m;\beta)$, where $k\in\sigma$ has the form $k(z)=z+\sum\limits_{n=2}^{\infty}k_{n}z^{n}$. If $k_2,\,k_3\neq0$, then
\[|a_2|\leq\min\left\{\sqrt{\frac{m(1-\beta)}{\left|k_3\right|}};\;\frac{m(1-\beta)}{\left|k_2\right|}\right\},
\quad\left|a_3\right|\leq\frac{m(1-\beta)}{\left|k_3\right|},\quad\text{and}
\quad\left|2a_2^2-a_3\right|\leq\frac{m(1-\beta)}{\left|k_3\right|}.\]
\end{theorem}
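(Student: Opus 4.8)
The plan is to read off the initial Taylor coefficients of the two convolutions appearing in Definition \ref{def1.1} and then feed them into Lemma \ref{lem2.1}. Writing $f(z)=z+\sum_{n\ge2}a_nz^n$ and $k(z)=z+\sum_{n\ge2}k_nz^n$, one has $(f*k)(z)=z+a_2k_2z^2+a_3k_3z^3+\dots$, so that
\[
\Phi(z):=\frac{(f*k)(z)}{z}=1+a_2k_2\,z+a_3k_3\,z^2+\dots\in\mathcal{P}_m(\beta).
\]
Applying Lemma \ref{lem2.1} to $\Phi$ with $h_1=a_2k_2$ and $h_2=a_3k_3$ gives $|a_2k_2|\le m(1-\beta)$ and $|a_3k_3|\le m(1-\beta)$; since $k_2,k_3\neq0$ this already yields $|a_2|\le m(1-\beta)/|k_2|$ and $|a_3|\le m(1-\beta)/|k_3|$.

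Next I would use the inverse-series expansion \eqref{expinv}, namely $g(w)=f^{-1}(w)=w-a_2w^2+(2a_2^2-a_3)w^3-\dots$, to compute
\[
\Psi(w):=\frac{(g*k)(w)}{w}=1-a_2k_2\,w+(2a_2^2-a_3)k_3\,w^2+\dots\in\mathcal{P}_m(\beta).
\]
Lemma \ref{lem2.1} applied to $\Psi$ with $l_1=-a_2k_2$ and $l_2=(2a_2^2-a_3)k_3$ reproduces the bound on $|a_2|$ and, more importantly, produces $|2a_2^2-a_3|\,|k_3|\le m(1-\beta)$, i.e. $|2a_2^2-a_3|\le m(1-\beta)/|k_3|$, which is the third assertion of the theorem.

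For the remaining estimate $|a_2|\le\sqrt{m(1-\beta)/|k_3|}$, the key (and only slightly non-routine) step is to eliminate $a_3$ by adding the two second-order coefficient relations: $h_2+l_2=a_3k_3+(2a_2^2-a_3)k_3=2a_2^2k_3$. Hence, by the triangle inequality together with Lemma \ref{lem2.1},
\[
2|a_2|^2|k_3|=|h_2+l_2|\le|h_2|+|l_2|\le 2m(1-\beta),
\]
so $|a_2|^2\le m(1-\beta)/|k_3|$. Combining this with the bound $|a_2|\le m(1-\beta)/|k_2|$ obtained above yields $|a_2|\le\min\bigl\{\sqrt{m(1-\beta)/|k_3|},\,m(1-\beta)/|k_2|\bigr\}$, completing the proof. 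I do not anticipate any genuine obstacle: the argument is a direct coefficient comparison, and the only point demanding care is keeping the inverse-function coefficients straight, which \eqref{expinv} already supplies.
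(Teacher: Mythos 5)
Your argument is correct and coincides with the paper's own proof: you derive the same coefficient identities $a_2k_2=p_1$, $a_3k_3=p_2$, $(2a_2^2-a_3)k_3=q_2$ from the two conditions in Definition \ref{def1.1} together with \eqref{expinv}, bound each coefficient by Lemma \ref{lem2.1}, and obtain the square-root estimate by adding the two second-order relations to eliminate $a_3$, exactly as in the paper.
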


\begin{proof}
Since $f\in\mathcal{BR}^k(m;\beta)$, from the Definition \ref{def1.1} we have
\begin{equation}\label{2.2}
\frac{(f*k)(z)}{z}=p(z)
\end{equation}
and
\begin{equation}\label{2.3}
\frac{(g*k)(w)}{w}=q(w),
\end{equation}
where $p,\;q\in\mathcal{P}_m(\beta)$ and $g=f^{-1}$. Using the fact that the functions $p$ and $q$ have the following Taylor expansions
\begin{eqnarray}
&&p(z)=1+p_1z+p_2z^2+p_3z^3+\dots,\;z\in\mathbb{D},\label{2.4}\\
&&q(w)=1+q_1w+q_2w^2+q_3w^3+\dots,\;w\in\mathbb{D},\label{2.5}
\end{eqnarray}
and equating the coefficients in \eqref{2.2} and \eqref{2.3}, from \eqref{expinv} we get
\begin{equation}\label{2.6}
k_2a_2=p_1,
\end{equation}
\begin{equation}\label{2.7}
k_3{a_3}=p_2,
\end{equation}
and
\begin{equation}\label{2.9}
k_3\left(2a_2^2-a_3\right)=q_2.
\end{equation}

Since $p,\;q\in\mathcal{P}_m(\beta)$, according to Lemma \ref{lem2.1}, the next inequalities hold:
\begin{eqnarray}
&&\left|p_k\right|\leq m(1-\beta),\;k\geq1,\label{2.13}\\
&&\left|q_k\right|\leq m(1-\beta),\;k\geq1,\label{2.14}
\end{eqnarray}
and thus, from \eqref{2.7} and \eqref{2.9}, by using the inequalities \eqref{2.13} and \eqref{2.14}, we obtain
\[\left|a_2\right|^2\leq\frac{\left|q_2\right|+\left|p_2\right|}{2\left|k_3\right|}\leq
\frac{m(1-\beta)}{\left|k_3\right|},\]
which gives
\begin{equation}\label{2.16}
\left|a_2\right|\leq\sqrt{\frac{m(1-\beta)}{\left|k_3\right|}}.
\end{equation}

From \eqref{2.6}, by using \eqref{2.13} we obtain immediately that
\[|a_2|=\left|\frac{p_1}{k_2}\right|\leq\frac{m(1-\beta)}{\left|k_2\right|},\]
and combining this with the inequality \eqref{2.16}, the first inequality of the conclusion is proved.

According to \eqref{2.7}, from \eqref{2.13} we easily obtain
\[|a_3|=\left|\frac{p_2}{k_3}\right|\leq\frac{m(1-\beta)}{\left|k_3\right|},\]
and from \eqref{2.9}, by using \eqref{2.13} and \eqref{2.14} we finally deduce
\[\left|2a_2^2-a_3\right|=\left|\frac{q_2}{k_3}\right|\leq\frac{m(1-\beta)}{\left|k_3\right|},\]
which completes our proof.
\end{proof}

Setting $\beta=0$ in Theorem \ref{thm2.1} we get the following special case:

\begin{corollary}\label{cor2.1}
Let $f(z)=z+\sum\limits_{n=2}^{\infty}a_{n}z^{n}$ be in the class $\mathcal{BR}^k(m;0)$, where $k\in\sigma$ has the form $k(z)=z+\sum\limits_{n=2}^{\infty}k_{n}z^{n}$. If $k_2,\,k_3\neq0$, then
\[\left|a_2\right|\leq\min\left\{\sqrt{\frac{m}{\left|k_3\right|}};\;\frac{m}{\left|k_2\right|}\right\},
\quad\left|a_3\right|\leq\frac{m}{\left|k_3\right|},\quad\text{and}\quad
\left|2a_2^2-a_3\right|\leq\frac{m}{\left|k_3\right|}.\]
\end{corollary}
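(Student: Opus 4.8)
The plan is to derive the result for $\mathcal{BR}^k(m;0)$ as an immediate specialization of Theorem~\ref{thm2.1}, exactly as the sentence preceding the corollary announces. Since the hypotheses of Theorem~\ref{thm2.1} hold verbatim with $\beta=0$ (the function $f$ lies in $\mathcal{BR}^k(m;0)$, and $k\in\sigma$ has the stated form with $k_2,k_3\neq0$), the only task is to substitute $\beta=0$ into each of the three conclusions and simplify $1-\beta=1$. I would state that $m(1-\beta)=m$, so that $\sqrt{m(1-\beta)/|k_3|}=\sqrt{m/|k_3|}$, $m(1-\beta)/|k_2|=m/|k_2|$, and $m(1-\beta)/|k_3|=m/|k_3|$, and then read off the displayed inequalities.

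Concretely, the proof would read: \emph{Since $f\in\mathcal{BR}^k(m;0)$, applying Theorem~\ref{thm2.1} with $\beta=0$ and using $1-\beta=1$ yields the three stated bounds immediately.} If one wants to be a little more explicit for the reader, one can recall which relations from the proof of Theorem~\ref{thm2.1} are being invoked — namely $k_2a_2=p_1$, $k_3a_3=p_2$, $k_3(2a_2^2-a_3)=q_2$, together with $|p_n|,|q_n|\leq m$ (the $\beta=0$ case of Lemma~\ref{lem2.1}) — but this merely retraces the earlier argument and is not strictly necessary.

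There is essentially no obstacle here: the corollary is a pure substitution, so the "hard part" is only making sure the minimum in the first bound is transcribed correctly and that no stray factor of $(1-\beta)$ is left behind. I would therefore keep the proof to one or two sentences.

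\begin{proof}
Since $f\in\mathcal{BR}^k(m;0)$ means exactly that $f\in\mathcal{BR}^k(m;\beta)$ with $\beta=0$, and the assumptions $k_2,k_3\neq0$ are satisfied, we may apply Theorem~\ref{thm2.1}. Substituting $\beta=0$, hence $m(1-\beta)=m$, in the three inequalities
\[|a_2|\leq\min\left\{\sqrt{\frac{m(1-\beta)}{\left|k_3\right|}};\;\frac{m(1-\beta)}{\left|k_2\right|}\right\},
\quad\left|a_3\right|\leq\frac{m(1-\beta)}{\left|k_3\right|},\quad
\left|2a_2^2-a_3\right|\leq\frac{m(1-\beta)}{\left|k_3\right|}\]
we obtain at once
\[\left|a_2\right|\leq\min\left\{\sqrt{\frac{m}{\left|k_3\right|}};\;\frac{m}{\left|k_2\right|}\right\},
\quad\left|a_3\right|\leq\frac{m}{\left|k_3\right|},\quad
\left|2a_2^2-a_3\right|\leq\frac{m}{\left|k_3\right|},\]
which is the desired conclusion.
\end{proof}
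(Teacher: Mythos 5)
Your proposal is correct and matches the paper exactly: the corollary is introduced by the sentence ``Setting $\beta=0$ in Theorem \ref{thm2.1} we get the following special case,'' so the intended proof is precisely the substitution $\beta=0$, $1-\beta=1$ into the three bounds of Theorem \ref{thm2.1}. Nothing further is needed.
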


For $k(z)=z/(1-z)^2$ the above corollary reduces to the next result:

\begin{example}\label{ex2.1}
If $f(z)=z+\sum\limits_{n=2}^{\infty}a_{n}z^{n}$ is in the class $\mathcal{BR}^{z/(1-z)^2}(m;0)$, then
\[\left|a_2\right|\leq\sqrt{\frac{m}{3}},\quad\left|a_3\right|\leq\frac{m}{3},\quad\text{and}\quad
\left|2a_2^2-a_3\right|\leq\frac{m}{3}.\]
\end{example}

Taking $k(z)=z/(1-z)$ in Corollary \ref{cor2.1}, we get:
\begin{example}\label{ex2.2}
If $f(z)=z+\sum\limits_{n=2}^{\infty}a_{n}z^{n}$ is in the class $\mathcal{BR}^{z/(1-z)}(m;0)$, then
\[\left|a_2\right|\leq\sqrt m,\quad\left|a_3\right|\leq m,\quad\text{and}\quad\left|2a_2^2-a_3\right|\leq m.\]
\end{example}

If we put $k(z)=z/(1-z)^2$ in Theorem \ref{thm2.1}, we deduce the next corollary:

\begin{corollary}\label{cor2.2}
If $f(z)=z+\sum\limits_{n=2}^{\infty}a_{n}z^{n}$ is in the class $\mathcal{B}(\beta)$, then
\[\left|a_2\right|\leq\left\{
\begin{array}{lll}
\sqrt{\frac{2(1-\beta)}{3}},&\text{if}&0\leq\beta\leq\frac{1}{3},\\[1em]
1-\beta,&\text{if}&\frac{1}{3}<\beta<1,
\end{array}
\right.\quad\left|a_3\right|\leq\frac{2(1-\beta)}{3},\quad\text{and}\quad
\left|2a_2^2-a_3\right|\leq\frac{2(1-\beta)}{3}.\]
\end{corollary}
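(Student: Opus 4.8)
\textbf{Proof proposal for Corollary \ref{cor2.2}.}
The plan is simply to specialize Theorem \ref{thm2.1} to the convolution kernel $k(z)=z/(1-z)^2$ and the value $m=2$, and then to resolve the minimum appearing in the bound for $|a_2|$ by a short case analysis. First I would recall, via Remark \ref{rem1.1}, that $\mathcal{B}(\beta)=\mathcal{BR}^{z/(1-z)^2}(2;\beta)$, so that Theorem \ref{thm2.1} applies with $m=2$. Next I would expand
\[
k(z)=\frac{z}{(1-z)^2}=\sum_{n=1}^{\infty}nz^n=z+2z^2+3z^3+\dots,
\]
which identifies the coefficients as $k_n=n$; in particular $k_2=2\neq0$ and $k_3=3\neq0$, so the hypothesis $k_2,k_3\neq0$ of Theorem \ref{thm2.1} is met.

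Substituting $m=2$, $|k_2|=2$, $|k_3|=3$ into the conclusion of Theorem \ref{thm2.1} immediately yields $|a_3|\leq 2(1-\beta)/3$ and $|2a_2^2-a_3|\leq 2(1-\beta)/3$, which are exactly the second and third assertions of the corollary. For the first assertion we have
\[
|a_2|\leq\min\left\{\sqrt{\tfrac{2(1-\beta)}{3}};\;1-\beta\right\},
\]
so it remains to decide which of the two quantities is the smaller one. Since $0\leq\beta<1$ gives $1-\beta>0$, squaring is monotone here and
\[
\sqrt{\tfrac{2(1-\beta)}{3}}\leq 1-\beta
\iff \tfrac{2(1-\beta)}{3}\leq(1-\beta)^2
\iff \tfrac{2}{3}\leq 1-\beta
\iff \beta\leq\tfrac13 .
\]
Hence for $0\leq\beta\leq 1/3$ the minimum equals $\sqrt{2(1-\beta)/3}$, while for $1/3<\beta<1$ it equals $1-\beta$, which is precisely the piecewise bound claimed for $|a_2|$.

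There is essentially no serious obstacle: the only point requiring a moment of care is getting the direction of the inequality right in the case split (and checking that division by $1-\beta>0$ is legitimate on the whole range $0\leq\beta<1$). Everything else is a direct substitution into an already proved theorem, so the write-up should be a few lines long.
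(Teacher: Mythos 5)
Your proposal is correct and follows exactly the route the paper intends: the corollary is obtained by substituting $k(z)=z/(1-z)^2$ (so $k_2=2$, $k_3=3$) and $m=2$ into Theorem \ref{thm2.1}, and the paper leaves the resolution of the minimum $\min\bigl\{\sqrt{2(1-\beta)/3};\,1-\beta\bigr\}$ implicit, which you carry out correctly via the equivalence $\sqrt{2(1-\beta)/3}\leq 1-\beta \iff \beta\leq 1/3$. Nothing to add.
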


\begin{remark}
For the special case $\frac{1}{3}<\beta<1$, the above first inequality, and the second one for all $0\leq\beta<1$, improve the estimates given by Srivastava et al. in \cite[Theorem 2]{sri}.
\end{remark}

\begin{theorem}\label{thm2.2}
Let $f(z)=z+\sum\limits_{n=2}^{\infty}a_{n}z^{n}$ be in the class $\mathcal{BV}^k(m;\alpha,\beta)$, with $\alpha\in\mathbb{C}\setminus\{-1\}$, where $k\in\sigma$ has the form $k(z)=z+\sum\limits_{n=2}^{\infty}k_{n}z^{n}$. If $k_2,\,k_3\neq0$ and
$$2(1+2\alpha)k_3-(1+3\alpha)k_2^2\neq0,$$
then
\[\left|a_2\right|\leq\min\left\{\sqrt{\dfrac{m(1-\beta)}{\left|2(1+2\alpha)k_3-(1+3\alpha)k_2^2\right|}};
\dfrac{m(1-\beta)}{|1+\alpha|\left|k_2\right|}\right\},\]
and
\begin{eqnarray*}
\left|a_3\right|\leq\min\left\{\dfrac{m(1-\beta)}{\left|2(1+2\alpha)k_3-(1+3\alpha)k_2^2\right|}+
\dfrac{m(1-\beta)}{2\left|1+2\alpha\right|\left|k_3\right|};
\dfrac{m(1-\beta)}{2\left|1+2\alpha\right|\left|k_3\right|}
\left(1+\dfrac{m(1-\beta)\left|1+3\alpha\right|}{\left|1+\alpha\right|^2}\right);\right.\\
\left.\dfrac{m(1-\beta)}{2\left|1+2\alpha\right|\left|k_3\right|}
\left(1+\dfrac{m(1-\beta)\left|4(1+2\alpha)k_3-(1+3\alpha)k_2^2\right|}
{\left|k_2\right|^2\left|1+\alpha\right|^2}\right)\right\},\quad\text{whenever}\quad
\alpha\in\mathbb{C}\setminus\left\{-\dfrac{1}{2}\right\}.
\end{eqnarray*}
\end{theorem}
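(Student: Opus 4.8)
The plan is to mimic the proof of Theorem~\ref{thm2.1}, the only genuinely new ingredient being the power-series expansion of the nonlinear differential functional appearing in Definition~\ref{def1.2}. First I would set $F:=f*k$ and $G:=g*k$, where $g=f^{-1}$, so that $F(z)=z+k_2a_2z^2+k_3a_3z^3+\dots$ while, by \eqref{expinv}, $G(w)=w-k_2a_2w^2+k_3(2a_2^2-a_3)w^3+\dots$; write $p,q\in\mathcal{P}_m(\beta)$ for the two functions of Definition~\ref{def1.2}, with Taylor coefficients as in \eqref{2.4}--\eqref{2.5}. The core computation is to expand
$$(1-\alpha)\frac{zF'(z)}{F(z)}+\alpha\Bigl(1+\frac{zF''(z)}{F'(z)}\Bigr)$$
through order $z^2$. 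Performing the two power-series divisions gives $\dfrac{zF'(z)}{F(z)}=1+k_2a_2z+(2k_3a_3-k_2^2a_2^2)z^2+\dots$ and $1+\dfrac{zF''(z)}{F'(z)}=1+2k_2a_2z+(6k_3a_3-4k_2^2a_2^2)z^2+\dots$, and combining them the functional equals $1+(1+\alpha)k_2a_2z+\bigl(2(1+2\alpha)k_3a_3-(1+3\alpha)k_2^2a_2^2\bigr)z^2+\dots$. Matching coefficients with $p$, and repeating verbatim for $G$ and $q$ (using $b_2=-a_2$, $b_3=2a_2^2-a_3$), yields the four relations
$$(1+\alpha)k_2a_2=p_1,\qquad 2(1+2\alpha)k_3a_3-(1+3\alpha)k_2^2a_2^2=p_2,$$
$$-(1+\alpha)k_2a_2=q_1,\qquad 2(1+2\alpha)k_3(2a_2^2-a_3)-(1+3\alpha)k_2^2a_2^2=q_2.$$

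From here on it is all elementary algebra together with Lemma~\ref{lem2.1}, which supplies $|p_n|,|q_n|\le m(1-\beta)$. For $|a_2|$: adding the two second-order relations gives $2\bigl(2(1+2\alpha)k_3-(1+3\alpha)k_2^2\bigr)a_2^2=p_2+q_2$, whence $|a_2|^2\le m(1-\beta)/\bigl|2(1+2\alpha)k_3-(1+3\alpha)k_2^2\bigr|$; the first-order relation gives $|a_2|\le m(1-\beta)/(|1+\alpha|\,|k_2|)$, and taking the smaller quantity is the asserted estimate. For $|a_3|$ I would read off the three competing bounds from three ways of writing $a_3$: (i) subtracting the two second-order relations gives $a_3-a_2^2=(p_2-q_2)/\bigl(4(1+2\alpha)k_3\bigr)$, and inserting $|a_2|^2\le m(1-\beta)/\bigl|2(1+2\alpha)k_3-(1+3\alpha)k_2^2\bigr|$ produces the first bound; (ii) solving the $p$-pair for $a_3$ gives $a_3=\bigl(p_2+(1+3\alpha)k_2^2a_2^2\bigr)/\bigl(2(1+2\alpha)k_3\bigr)$, and substituting $a_2^2=p_1^2/\bigl((1+\alpha)^2k_2^2\bigr)$ produces the second bound; (iii) rearranging the second $q$-relation gives $a_3=\bigl((4(1+2\alpha)k_3-(1+3\alpha)k_2^2)a_2^2-q_2\bigr)/\bigl(2(1+2\alpha)k_3\bigr)$, and again substituting $a_2^2=p_1^2/\bigl((1+\alpha)^2k_2^2\bigr)$ produces the third bound. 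In each case one just applies the triangle inequality and Lemma~\ref{lem2.1}; the hypotheses $\alpha\ne-1$, $\alpha\ne-\tfrac12$, $k_2,k_3\ne0$ and $2(1+2\alpha)k_3-(1+3\alpha)k_2^2\ne0$ are exactly what is needed to make these divisions legitimate.

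The one delicate point is the bookkeeping in the two series divisions producing the coefficients $(1+\alpha)k_2a_2$ and $2(1+2\alpha)k_3a_3-(1+3\alpha)k_2^2a_2^2$: a single sign slip there contaminates every subsequent estimate, so I would cross-check these expansions against the specializations $\alpha=0$ (the bi-starlike-type case) and $\alpha=1$ (the bi-convex-type case). Beyond that I anticipate no conceptual obstacle: once the four coefficient identities are fixed, the three representations of $a_3$ and the two of $a_2$ are routine, and the minima in the statement simply record that for each admissible $k$ and $\alpha$ one keeps whichever representation gives the sharper bound.
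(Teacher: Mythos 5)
Your proposal is correct and follows essentially the same route as the paper: the same series expansions yield the coefficient identities \eqref{2.30}, \eqref{2.31} and \eqref{2.33}, the sum and difference of the two second-order relations give the paper's formulas \eqref{A} and \eqref{B} for $a_2^2$ and the first $a_3$-bound, and your items (ii) and (iii) are exactly the representations \eqref{2.38} and \eqref{2.39}, all estimated via Lemma \ref{lem2.1}. The only cosmetic difference is that you record the (unused) relation $-(1+\alpha)k_2a_2=q_1$ and spell out the intermediate expansions of $zF'/F$ and $1+zF''/F'$, which the paper states only in combined form; your sign bookkeeping checks out.
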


\begin{proof}
If $f\in\mathcal{BV}^k(m; \alpha,\beta)$, according to the Definition \ref{def1.2} we have
\[(1-\alpha)\frac{z(f*k)'(z)}{(f*k)(z)}+\alpha\left(1+\frac{z(f*k)''(z)}{(f*k)'(z)}\right)=p(z)\]
and
\[(1-\alpha)\frac{w(g*k)'(w)}{(g*k)(w)}+\alpha\left(1+\frac{w(g*k)''(w)}{(g*k)'(w)}\right)=q(w),\]
where $p,\;q\in\mathcal{P}_m(\beta)$ and $g=f^{-1}$. Since
\begin{eqnarray*}
&&(1-\alpha)\dfrac{z(f*k)'(z)}{(f*k)(z)}+\alpha\left(1+\dfrac{z(f*k)''(z)}{(f*k)'(z)}\right)=\\
&&1+(1+\alpha)a_2 k_2z+\left[2(1+2\alpha)a_3k_3-(1+3\alpha)a_2^2k_2^2\right]z^2+\dots\;z\in\mathbb{D},
\end{eqnarray*}
and according to \eqref{expinv}
\begin{eqnarray*}
&&(1-\alpha)\dfrac{z(g*k)'(w)}{(g*k)(w)}+\alpha\left({1+\dfrac{z(g*k)''(w)}{(g*k)'(w)}}\right)=\\
&&1-(1+\alpha)a_2k_2w+\Big\{\left[4(1+2\alpha)k_3-(1+3\alpha)k_2^2\right]a_2^2-2(1+2\alpha)a_3k_3\Big\}w^2+\dots,
\;w\in\mathbb{D},
\end{eqnarray*}
from \eqref{2.4} and \eqref{2.5} combined with the above two expansion formulas, it follows that
\begin{eqnarray}
&&(1+\alpha){a_2k_2}=p_1,\label{2.30}\\
&&2(1+2\alpha){a_3k_3}-(1+3\alpha)a_2^2 k_2^2=p_2,\label{2.31}
\end{eqnarray}
and
\begin{equation}\label{2.33}
\left[4(1+2\alpha)k_3-(1+3\alpha)k_2^2\right]a_2^2-2(1+2\alpha)a_3k_3=q_2.
\end{equation}
Now, from \eqref{2.31} and \eqref{2.33} we deduce that
\begin{equation}\label{A}
a_2^2=\frac{p_2+q_2}{4(1+2\alpha)k_3-2(1+3\alpha)k_2^2},\quad\text{whenever}\quad 2(1+2\alpha)k_3-(1+3\alpha)k_2^2\neq0,
\end{equation}
and
\[4(1+2\alpha)k_3\left(a_3-a_2^2\right)=p_2-q_2.\]
Using \eqref{A} in the above relation, we obtain
\begin{eqnarray}
&a_3=\dfrac{p_2+q_2}{4(1+2\alpha)k_3-2(1+3\alpha)k_2^2}+\dfrac{p_2-q_2}{4(1+2\alpha)k_3},
\quad\text{whenever}\label{B}\\
&2(1+2\alpha)k_3-(1+3\alpha)k_2^2\neq0,\;\alpha\in\mathbb{C}\setminus\left\{-\dfrac{1}{2}\right\}.\nonumber
\end{eqnarray}
From \eqref{2.30} and \eqref{2.31} we get
\begin{equation}\label{2.38} a_3=\frac{1}{2(1+2\alpha)k_3}\left[p_2+\frac{1+3\alpha}{(1+\alpha)^2}p_1^2\right],
\quad\text{for}\quad\alpha\in\mathbb{C}\setminus\left\{-1;-\frac{1}{2}\right\},
\end{equation}
while from \eqref{2.30} and \eqref{2.33} we deduce that
\begin{equation}\label{2.39}
a_3=\frac{1}{2(1+2\alpha)k_3}
\left[-q_2+\frac{4(1+2\alpha)k_3-(1+3\alpha)k_2^2}{k_2^2(1+\alpha)^2}p_1^2\right],
\quad\text{for}\quad\alpha\in\mathbb{C}\setminus\left\{-1;-\frac{1}{2}\right\}.
\end{equation}
Combining \eqref{2.30} and \eqref{A} for the computation of the upper-bound of $\left|a_2\right|$, and \eqref{B}, \eqref{2.38} and \eqref{2.39} for the computation of $\left|a_3\right|$, by using Lemma \ref{lem2.1} we easily find the estimates of our theorem.
\end{proof}

Taking $\alpha=0$ and $\alpha=1$ in Theorem \ref{thm2.2} we obtain the following two special cases, respectively:

\begin{corollary}\label{cor2.3}
Let $f(z)=z+\sum\limits_{n=2}^{\infty}a_{n}z^{n}$ be in the class $\mathcal{S}_{m}^k(\beta)$, where $k\in\sigma$ has the form $k(z)=z+\sum\limits_{n=2}^{\infty}k_{n}z^{n}$. If $k_2,\,k_3\neq0$ and
\[2k_3-k_2^2\neq0,\]
then
\[\left|a_2\right|\leq\min\left\{\sqrt{\frac{m(1-\beta)}{\left|2k_3-k_2^2\right|}};
\frac{m(1-\beta)}{\left|k_2\right|}\right\},\]
and
\begin{eqnarray*}
\left|a_3\right|\leq\min\left\{\dfrac{m(1-\beta)}{\left|2k_3-k_2^2\right|}+\dfrac{m(1-\beta)}{2\left|k_3\right|};
\dfrac{m(1-\beta)\left(1+m(1-\beta)\right)}{2\left|k_3\right|};\right.\\
\left.\dfrac{m(1-\beta)}{2\left|k_3\right|}
\left(1+\dfrac{m(1-\beta)\left|4k_3-k_2^2\right|}{\left|k_2\right|^2}\right)\right\}.
\end{eqnarray*}
\end{corollary}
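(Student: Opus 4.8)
The plan is to deduce Corollary \ref{cor2.3} directly from Theorem \ref{thm2.2} by setting $\alpha=0$, so that essentially all that is needed is to verify that $\alpha=0$ is an admissible value in that theorem and then to simplify the resulting bounds.

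First I would recall that, by Remarks \ref{rem1.2}(i), $\mathcal{S}_m^k(\beta)=\mathcal{BV}^k(m;0,\beta)$, and observe that $0\in\mathbb{C}\setminus\{-1;-\tfrac{1}{2}\}$, so that every relation obtained in the proof of Theorem \ref{thm2.2} — namely \eqref{2.30}, \eqref{2.31}, \eqref{2.33}, \eqref{A}, \eqref{B}, \eqref{2.38} and \eqref{2.39} — remains valid for $\alpha=0$. Moreover the non-vanishing condition $2(1+2\alpha)k_3-(1+3\alpha)k_2^2\neq0$ specializes to exactly the hypothesis $2k_3-k_2^2\neq0$ of the corollary, so all the hypotheses of Theorem \ref{thm2.2} are met.

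Next I would substitute $\alpha=0$ (hence $1+\alpha=1+2\alpha=1+3\alpha=1$) into the estimates of Theorem \ref{thm2.2}, using that $|2(1+2\alpha)k_3-(1+3\alpha)k_2^2|=|2k_3-k_2^2|$, $|1+\alpha|\,|k_2|=|k_2|$, $|1+2\alpha|=1$, $|1+3\alpha|=1$, $|1+\alpha|^2=1$ and $|4(1+2\alpha)k_3-(1+3\alpha)k_2^2|=|4k_3-k_2^2|$. Feeding these into the bound for $|a_2|$ immediately yields $|a_2|\leq\min\bigl\{\sqrt{m(1-\beta)/|2k_3-k_2^2|};\,m(1-\beta)/|k_2|\bigr\}$, and carrying them through the three terms of the minimum defining the bound for $|a_3|$ produces, respectively, $\tfrac{m(1-\beta)}{|2k_3-k_2^2|}+\tfrac{m(1-\beta)}{2|k_3|}$, then $\tfrac{m(1-\beta)(1+m(1-\beta))}{2|k_3|}$, and finally $\tfrac{m(1-\beta)}{2|k_3|}\bigl(1+\tfrac{m(1-\beta)|4k_3-k_2^2|}{|k_2|^2}\bigr)$, which is precisely the claimed estimate.

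Since the argument is a pure specialization, there is no real obstacle; the only point deserving a line of care is the preliminary check that $\alpha=0$ avoids the excluded values $-1$ and $-\tfrac{1}{2}$ — which guarantees that \eqref{2.38} and \eqref{2.39}, and hence the second and third terms in the $|a_3|$-estimate, are legitimately inherited — together with the observation that the general non-degeneracy hypothesis collapses exactly to $2k_3-k_2^2\neq0$. With these remarks in place the corollary follows at once.
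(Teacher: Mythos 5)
Your proposal is correct and coincides with the paper's own treatment: the authors obtain Corollary \ref{cor2.3} precisely by setting $\alpha=0$ in Theorem \ref{thm2.2}, and your verification that $0\notin\{-1,-\tfrac{1}{2}\}$ and that the non-degeneracy condition specializes to $2k_3-k_2^2\neq0$ is the only checking required. The simplified bounds you compute match the stated estimates exactly.
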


\begin{corollary}\label{cor2.4}
Let $f(z)=z+\sum\limits_{n=2}^{\infty}a_{n}z^{n}$ be in the class $\mathcal{C}_{m}^k(\beta)$, where $k\in\sigma$ has the form $k(z)=z+\sum\limits_{n=2}^{\infty}k_{n}z^{n}$. If $k_2,\,k_3\neq0$ and
\[3k_3-2k_2^2\neq0,\]
then
\[\left|a_2\right|\leq\min\left\{\sqrt{\frac{m(1-\beta)}{\left|6k_3-4k_2^2\right|}};
\frac{m(1-\beta)}{2\left|k_2\right|}\right\},\]
and
\begin{eqnarray*}
|a_3|\leq\min\left\{\dfrac{m(1-\beta)}{\left|6k_3-4k_2^2\right|}+\dfrac{m(1-\beta)}{6\left|k_3\right|};
\dfrac{m(1-\beta)\left(1+m(1-\beta)\right)}{6\left|k_3\right|};\right.\\
\left.\dfrac{m(1-\beta)}{6\left|k_3\right|}
\left(1+\dfrac{m(1-\beta)\left|3k_3-k_2^2\right|}{\left|k_2\right|^2}\right)\right\}.
\end{eqnarray*}
\end{corollary}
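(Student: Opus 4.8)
The plan is to obtain this corollary simply as the case $\alpha=1$ of Theorem \ref{thm2.2}. First I would record that, by Remarks \ref{rem1.2}$(i)$, the class $\mathcal{C}_m^k(\beta)$ coincides with $\mathcal{BV}^k(m;1,\beta)$, so that every $f\in\mathcal{C}_m^k(\beta)$ falls under the hypotheses of Theorem \ref{thm2.2} with $\alpha=1$. Before invoking that theorem I would check admissibility of this value: $\alpha=1$ belongs both to $\mathbb{C}\setminus\{-1\}$ and to $\mathbb{C}\setminus\{-1/2\}$, hence the estimate for $|a_2|$ as well as the complete three-term estimate for $|a_3|$ are available. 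Moreover the non-degeneracy requirement $2(1+2\alpha)k_3-(1+3\alpha)k_2^2\neq0$ of Theorem \ref{thm2.2} becomes $6k_3-4k_2^2=2(3k_3-2k_2^2)\neq0$, which is precisely the assumption $3k_3-2k_2^2\neq0$ of the corollary, so no additional hypothesis is needed.

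It then remains to substitute the values $1+\alpha=2$, $1+2\alpha=3$, $1+3\alpha=4$ and $|1+\alpha|^2=4$ into the two displayed bounds of Theorem \ref{thm2.2}. For $|a_2|$ this turns $2(1+2\alpha)k_3-(1+3\alpha)k_2^2$ into $6k_3-4k_2^2$ under the radical and $|1+\alpha|\,|k_2|$ into $2|k_2|$, giving the stated estimate. For $|a_3|$: the first entry is immediate; in the second entry the factor $m(1-\beta)|1+3\alpha|/|1+\alpha|^2$ collapses to $4m(1-\beta)/4=m(1-\beta)$, yielding $\dfrac{m(1-\beta)\bigl(1+m(1-\beta)\bigr)}{6|k_3|}$; and in the third entry one uses $4(1+2\alpha)k_3-(1+3\alpha)k_2^2=12k_3-4k_2^2=4(3k_3-k_2^2)$, so that the factor $4$ cancels against $|1+\alpha|^2=4$ in the denominator, leaving $\dfrac{m(1-\beta)|3k_3-k_2^2|}{|k_2|^2}$ inside the parentheses. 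The minimum over the three entries is legitimate because, with $\alpha=1\notin\{-1,-1/2\}$, all of the relations \eqref{B}, \eqref{2.38} and \eqref{2.39} underlying them remain valid.

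Since the whole argument is a direct specialization of an already proved theorem, there is no genuine obstacle; the only place demanding attention is the arithmetic in the third $|a_3|$ entry, where one must correctly perform the simplification $|12k_3-4k_2^2|/|1+\alpha|^2=4|3k_3-k_2^2|/4=|3k_3-k_2^2|$ so that the claimed form of the factor emerges. Everything else is routine bookkeeping with the constants $2,3,4$ arising from $1+\alpha$, $1+2\alpha$, $1+3\alpha$ evaluated at $\alpha=1$.
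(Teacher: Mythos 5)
Your proposal is correct and is exactly the paper's route: the paper derives Corollary \ref{cor2.4} by setting $\alpha=1$ in Theorem \ref{thm2.2}, and your substitutions $1+\alpha=2$, $1+2\alpha=3$, $1+3\alpha=4$ together with the cancellation $|12k_3-4k_2^2|/|1+\alpha|^2=|3k_3-k_2^2|$ reproduce all the stated bounds, with the non-degeneracy condition correctly reducing to $3k_3-2k_2^2\neq0$.
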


\bigskip

{\bf Conflict of Interests.} The authors declare that they have no conflict of interests regarding the publication of this paper.
%***********************************

\end{document}